\theoremstyle{plain}
\newtheorem{theorem}{Theorem}%[section]
\newtheorem{lemma}[theorem]{Lemma}
\renewcommand{\leq}{\leqslant}
\renewcommand{\ge}{\geqslant}
\renewcommand{\le}{\leqslant}
\begin{document}

 % input fancyhdr so not all uppercase
\title[Subgroups with maximal derived length]{\large S\MakeLowercase{ubgroups of the upper-triangular matrix group with maximal}\vskip0mm \MakeLowercase{derived length and a minimal number of generators}}

\date{8 August 1997}	% date resubmitted to Campbell and Robertson
%\subjclass 20C20\endsubjclass

\author{S.\,P. Glasby}

\address{\small Department of Mathematics and Computing Science\\
The University of the South Pacific\\
PO Box 1168, Suva, Fiji.}

\def\F{\mathbb{F}}
\def\Z{\mathbb{Z}}
\def\GL{\text{GL}}
\def\Mat{\textup{Mat}}
\def\refB{1}
\def\refH{2}
\def\refHu{3}
\def\qed{\hfill\llap{$\sqcup$}\llap{$\sqcap$}}

\begin{abstract}
The group $U_n(\F)$ of all $n\times n$ unipotent upper-triangular matrices
over $\F$ has derived length $d:=\lceil\log_2(n)\rceil$, equivalently
$2^{d-1}<n\le 2^d$.
We prove that $U_n(\F)$ has a 3-generated subgroup of derived length~$d$,
and it has a 2-generated subgroup of derived length~$d$
if and only if $\frac{21}{32} 2^d<n\le 2^d$.
\end{abstract}

%\subjclass 20C20 \endsubjclass
\maketitle % print the title here

\section{Introduction}

\noindent Let $\F$ be a field and let $U_n(\F)$ (or $U_n$)
denote the group of $n\times n$ upper-triangular matrices over $\F$
with 1's on the main diagonal and 0's below.
If $2^{d-1}<n\le 2^d$, then $U_n$ has derived length $d$ and has a
subgroup generated by $n-1$ elements which also has derived length $d$
(see~\cite{H}).
We show in Theorem~\ref{T:No2Gen} that $U_n$ has a 3-generated subgroup
with derived length $d$. In Theorem~\ref{T3.4} we show that $U_n$ has a
2-generated subgroup of derived length~$d$ if and only if
$\frac{21}{32} 2^d<n\le 2^d$.
It follows that the proportion, $\pi(N)$, of $n\le N$ such that $U_n$ has
a 2-generated subgroup of maximal derived length satisfies
$\frac{11}{21}<\pi(N)\le 1$, $\liminf \pi(N)=\frac{11}{21}$ and
$\limsup \pi(N)=\frac{11}{16}$.
Theorems~\ref{T:No2Gen} and \ref{T3.4} are constructive in the sense that
the generating matrices are explicitly given by recursive formulas.

We shall now introduce some notation and state some well-known properties of
$U_n$ (see~\cite{H}). The $k$th term
of the lower central series for $U_n$, denoted $\gamma_k(U_n)$, comprises
the matrices $(a_{i,j})\in U_n$ with $a_{i,j}=0$ if $0<j-i<k$.
Furthermore, the $k$th term in the derived series for $U_n$ is
$U_n^{(k)}=\gamma_{2^k}(U_n)$.

In the sequel we shall assume that $d=\lceil\log_2(n)\rceil$
and consider subgroups $G$ of $U_n$ where $G^{(d-1)}$ is not trivial.
Let $1\le i<j\le n$
and let $X_{i,j}\in U_n$ be the matrix obtained by adding row $j$
of the identity matrix, $I$, to row $i$ (so its $(i,j)$th entry is 1). Then
% should be displayed but gives extra page with bad page breaks
$$
[X_{i,j},X_{k,\ell}]=X_{i,j}^{-1}X_{k,\ell}^{-1}X_{i,j}X_{k,\ell}
$$
equals $I$ if $j<k$, and equals $X_{i,\ell}$ if $j=k$.
In order to show that $U_n$ has derived length $d$ for all $n$ satisfying
$2^{d-1}<n\le 2^d$, it suffices to show that
$\langle X_{1,2},X_{2,3},\dots,X_{n-1,n}\rangle$ has derived length $d$
when $n=2^{d-1}+1$. The latter can be proved using induction on $d$
based on the following reasoning
\begin{align*}
 X_{1,9}&=[X_{1,5},X_{5,9}]\\
	&=[\,[X_{1,3},X_{3,5}],[X_{5,7},X_{7,9}]\,]\\
	&=[\,[\,[X_{1,2},X_{2,3}],[X_{3,4},X_{4,5}]\,],
	[\,[X_{5,6},X_{6,7}],[X_{7,8},X_{8,9}]\,]\,].
\end{align*}
At the heart of this proof is a binary tree with $d$ layers and $2^d-1$
vertices. The vertices at layer $k$ are the elements
$X_{1+(i-1)2^{k-1}, 1+i2^{k-1}}$ of $U_n^{(k-1)}$.
If $j=1+(i-1)2^{k-1}$, then the vertices $X_{j,j+2^{k-1}}$ and
$X_{j+2^{k-1},j+2^k}$ of layer~$k$ are joined to $X_{j,j+2^k}$ on the
next layer.

\section{3-generated subgroups}

\noindent The idea behind the proof of Theorem~\ref{T:No2Gen} is to ``re-cycle''
vertices of the above binary tree. For example, the four matrices
$X_{1,2},X_{2,3},X_{3,4},X_{4,5}$ are not needed to show that
$X_{1,5}\in U_5^{(2)}$: three matrices suffice as
$$[\,[X_{1,2},X_{2,3}X_{3,4}], [X_{2,3}X_{3,4},X_{4,5}]\,]=
[X_{1,3}X_{1,4}, X_{3,5}]=X_{1,5}.$$
The graph at the heart of the proof of Theorem~\ref{T:No2Gen}
has fewer vertices than
the complete bipartite binary tree with $2^d-1$ vertices.
It has $d$ layers with 3 vertices per layer, where the
vertices of layer $k$ correspond to elements of $G^{(k-1)}$. Let
$A$, $B$, $C$ be the matrices corresponding to the vertices of layer~$k$.
Then the commutators $[B,C]$, $[C,A]$, $[A,B]$ correspond to the vertices of
layer~$k+1$. Thus the edges between layers $k$ and $k+1$ form a
bipartite graph $K$, and the full graph is obtained by joining
$d-1$ copies of $K$ end-to-end. Our objective is to inductively construct
three layer~1 matrices, so that at least one of the layer~$d$ matrices
is non-trivial.

Let $F$ be the free group $\langle x_1,x_2,x_3\,|\ \ \rangle$ of rank 3.
The following lemma was much harder to conceive than to prove.

\begin{lemma}\label{L2.1} Let $d$ be a positive integer, and let $n=2^{d-1}+1$.
Then there exist matrices $A_n, B_n, C_n\in U_n$ and a word
$w_n(x_1,x_2,x_3)\in F^{(d-1)}$ such that
\begin{equation}\label{E1}
  w_n(A_n,B_n,C_n)=X_{1,n},\ w_n(B_n,C_n,A_n)=I,\ \text{and}
  \ w_n(C_n,A_n,B_n)=I.
\end{equation}
\end{lemma}

\begin{proof} The proof uses induction on $d$. When $d=1$, take
$w_2(x_1,x_2,x_3)=x_1$ and $A_2=X_{1,2}$, $B_2=C_2=I$. (More generally, if
$r^3+s^3+t^3-3rst\ne 0$ in $\F$ where $r,s,t\in\Z$, then
we may take $w_2=x_1^rx_2^sx_3^t$ and find $A_2, B_2, C_2\in U_2$
such that~\eqref{E1} holds.) Suppose that $A_n, B_n, C_n\in U_n$ and
$w_n\in F^{(d-1)}$ satisfy (1). We shall construct appropriate
$A_{2n-1}, B_{2n-1}, C_{2n-1}$ and $w_{2n-1}$. Now $n=2^{d-1}+1$ and
$2n-1=2^d+1$. There is a surjective homomorphism
$$\pi\colon U_{2n-1}\to U_n\times U_n\quad\text{given by}\quad
\pi(A)=(\lambda(A),\rho(A)),$$
where $\lambda(A)$ is the upper-left $n\times n$ submatrix of $A$,
and $\rho(A)$ is the lower-right $n\times n$ submatrix of $A$. Note that
$\lambda(A)$ and $\rho(A)$ overlap at the $(n,n)$th entry of $A$, which is a 1.

Choose $A_{2n-1}, B_{2n-1}, C_{2n-1}\in U_{2n-1}$ such that
$$\pi(A_{2n-1})=(A_n,B_n),\qquad\pi(B_{2n-1})=(B_n,C_n),\qquad
\pi(C_{2n-1})=(C_n,A_n).$$
Clearly $A_{2n-1}$, $B_{2n-1}$ and $C_{2n-1}$ are not uniquely defined.
(A different choice may be obtained by multiplying by an element of
$\ker(\pi)\cong\F^{(n-1)^2}$.) Define $w_{2n-1}$ by
\[
  w_{2n-1}(x_1,x_2,x_3)=[\,w_n(x_1,x_2,x_3), w_n(x_3,x_1,x_2)\,].
\]
Clearly, $w_{2n-1}\in F^{(d)}$.

Consider $w_{2n-1}(A_{2n-1},B_{2n-1},C_{2n-1})$. Now
\begin{align*}
  \pi(\,w_n(A_{2n-1},B_{2n-1},C_{2n-1})\,)
    &=w_n(\pi(A_{2n-1}),\pi(B_{2n-1}),\pi(C_{2n-1}))\\
  &=\bigl(\,w_n(A_n,B_n,C_n),
    \,w_n(B_n,C_n,A_n)\,\bigr)\\
  &=(X_{1,n}, I).
\end{align*}
Similarly,
\begin{align*}
  \pi(w_n(B_{2n-1},C_{2n-1},A_{2n-1}))&=(I,I)\quad\text{and}\\
  \pi(w_n(C_{2n-1},A_{2n-1},B_{2n-1}))&=(I,X_{1,n}).
\end{align*}

Now $\pi(X_{1,n})=(X_{1,n},I)$ and $\pi(X_{n,2n-1})=(I,X_{1,n})$.
(Here we can tell from the context whether $X_{1,n}$ lies in
$U_{2n-1}$ or $U_n$.) Therefore
\begin{align*}
  w_n(A_{2n-1},B_{2n-1},C_{2n-1})&=X_{1,n}Z_1,\\
  w_n(B_{2n-1},C_{2n-1},A_{2n-1})&=Z_2,\quad\text{and}\\
  w_n(C_{2n-1},A_{2n-1},B_{2n-1})&=X_{n,2n-1}Z_3
\end{align*}
where $Z_1,Z_2,Z_3\in\ker(\pi)$. Since $\ker(\pi)$ is abelian, and
is centralized by both $X_{1,n}$ and $X_{n,2n-1}$, it follows that
\begin{align*}
  w_{2n-1}(A_{2n-1},B_{2n-1},C_{2n-1})&=[X_{1,n}Z_1,X_{n,2n-1}Z_3]\\
				&=[X_{1,n},X_{n,2n-1}]=X_{1,2n-1},\\
  w_{2n-1}(B_{2n-1},C_{2n-1},A_{2n-1})&=[Z_2,X_{1,n}Z_1]=I,\\
  w_{2n-1}(C_{2n-1},A_{2n-1},B_{2n-1})&=[X_{n,2n-1}Z_3,Z_2]=I.
\end{align*}
This completes the induction, and the proof.
\end{proof}

Recall the observation that $U_n(\F)$ has derived length
$d:=\lceil\log_2(n)\rceil$,
and the subgroup $\langle X_{1,2},X_{2,3},\dots,X_{n-1,n}\rangle$
has $n-1$ generators and derived length~$d$.

\begin{theorem}\label{T:No2Gen} The group $U_n(\F)$ of $n\times n$
upper-triangular matrices over a field $\F$ with all eigenvalues $1$,
has a $3$-generated subgroup whose derived length is
$d:=\lceil\log_2(n)\rceil$. Furthermore, if $n\le \frac58 2^d$
then $U_n(\F)$ has no $2$-generated subgroup of derived length~$d$.
\end{theorem}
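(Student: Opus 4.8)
The plan is to read the $3$-generated statement straight off Lemma~\ref{L2.1}, and to obtain the non-existence statement by tracking, for a hypothetical $2$-generated $G$ of derived length $d$, the lower-central-series ``depth'' of each term of its derived series, where the restriction to two generators forces one extra unit of depth early on.

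For the $3$-generated assertion, fix $d$ with $2^{d-1}<n\le 2^d$, so that $N:=2^{d-1}+1\le n$. Applying Lemma~\ref{L2.1} with its integer equal to $N$ yields $A,B,C\in U_N$ and $w\in F^{(d-1)}$ with $w(A,B,C)=X_{1,N}\ne I$. Since $w\in F^{(d-1)}$, the element $w(A,B,C)$ lies in $\langle A,B,C\rangle^{(d-1)}$, so $\langle A,B,C\rangle$ has derived length at least $d$. Embedding $U_N$ as the top-left block of $U_n$ (legitimate since $N\le n$) and recalling that $U_n$ itself has derived length $d$ in this range, the image is a $3$-generated subgroup of derived length exactly $d$.

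For the non-existence statement I would argue the contrapositive: if $G=\langle A,B\rangle\le U_n$ satisfies $G^{(d-1)}\ne I$, then $n>\tfrac58 2^d$. For $0\le k\le d-1$ let $\mu_k$ be the largest integer with $G^{(k)}\subseteq\gamma_{\mu_k}(U_n)$; this is well defined since $G^{(k)}\ne I$ throughout. Two facts drive the estimate. First, the elementary inclusion $[\gamma_a(U_n),\gamma_b(U_n)]\subseteq\gamma_{a+b}(U_n)$ gives $G^{(k+1)}=[G^{(k)},G^{(k)}]\subseteq\gamma_{2\mu_k}(U_n)$, hence $\mu_{k+1}\ge 2\mu_k$. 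Second, and crucially, I claim $\mu_2\ge 5$, that is $G^{(2)}\subseteq\gamma_5(U_n)$. To see this, note $G'=\langle [A,B]^G\rangle$ is the normal closure of the single commutator $[A,B]$. Writing $C$ for the second-superdiagonal part of $[A,B]$ (its image in $\gamma_2/\gamma_3$), each conjugate $[A,B]^g$ has this same part $C$, and since the passage to $\gamma_2/\gamma_3$ is a homomorphism in which parts add, every element of $G'$ is congruent to a scalar multiple of $C$ modulo $\gamma_3$. Thus for $u,v\in G'$ with $u\equiv\alpha C$ and $v\equiv\beta C\pmod{\gamma_3}$, the fourth-superdiagonal part of $[u,v]$ is the matrix bracket $[\alpha C,\beta C]=\alpha\beta[C,C]=0$, so $[u,v]\in\gamma_5$ and $G^{(2)}\subseteq\gamma_5(U_n)$. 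If instead $[A,B]\in\gamma_3$, then $G'\subseteq\gamma_3$ and $G^{(2)}\subseteq\gamma_6\subseteq\gamma_5$, so the claim holds in any case.

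Combining the two facts, for $d\ge 3$ I get $\mu_{d-1}\ge 2^{d-3}\mu_2\ge 5\cdot 2^{d-3}=\tfrac58 2^d$. Since $G^{(d-1)}\ne I$ contains an element with a nonzero entry on the $\mu_{d-1}$-th superdiagonal, necessarily $n-1\ge\mu_{d-1}$, whence $n>\tfrac58 2^d$; the cases $d\le 2$ follow at once from the weaker $\mu_{d-1}\ge 2^{d-1}$. I expect the main obstacle to be isolating and justifying the single extra jump $\mu_2\ge 5$: the doubling bound by itself yields only $n>2^{d-1}$, and it is precisely the rank-one-ness of the second-superdiagonal part of $G'$ — a feature of having two rather than three generators, in contrast to the three-colour cycling exploited in Lemma~\ref{L2.1} — that upgrades $\mu_2$ from $4$ to $5$ and propagates through the doublings to the constant $\tfrac58$.
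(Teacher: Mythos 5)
Your proposal is correct and takes essentially the same route as the paper: the $3$-generated part is read off Lemma~\ref{L2.1} exactly as there, and the non-existence part rests on the same key fact that two generators force $\gamma_2(G)/\gamma_3(G)$ to be cyclic, whence $G^{(2)}\subseteq\gamma_5$ and then $G^{(d-1)}\subseteq\gamma_{5\cdot 2^{d-3}}$ by doubling. The only (immaterial) difference is that you certify the jump to $\gamma_5$ via the antisymmetry of the second-superdiagonal bracket (Lemma~\ref{L3.2}), while the paper uses the group-theoretic identity $[\gamma_2(G),\gamma_2(G)]=[\gamma_2(G),\gamma_3(G)]$.
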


\begin{proof}
Set $m=2^{d-1}+1$. Then both $U_m$ and $U_n$ have derived length~$d$.
By Lemma~\ref{L2.1}, $U_m$ has a 3-generated subgroup with derived
length~$d$, and hence so too does $U_n$, as $U_n$ has a subgroup
isomorphic to~$U_m$.

If $d<3$, then there are no integers in the range
$\frac12 2^d< n\le \frac58 2^d$.
Suppose $d\ge 3$ and $G=\langle A,B\rangle$ is a 2-generated subgroup of
$U_n$ where $\frac12 2^d< n\le \frac58 2^d$.
Then
\[
  \gamma_2(G)/\gamma_3(G)=\langle[A,B]\gamma_3(G)\rangle
\]
is cyclic, and therefore
$$G^{(2)}=[\gamma_2(G),\gamma_2(G)]=[\gamma_2(G),\gamma_3(G)]
\subseteq\gamma_5(G).$$
A simple induction shows $G^{(d-1)}\subseteq\gamma_{5\cdot2^{d-3}}(G)$
for $d\ge 3$. Since $n\leq 5\cdot2^{d-3}$, we have
$$G^{(d-1)}\subseteq\gamma_{5\cdot2^{d-3}}(G)
\subseteq \gamma_n(G)\subseteq\gamma_n(U_n)=\{I\}.$$
Therefore $G$ has derived length less than $d$.
\end{proof}

In the above proof, there were choices for $A_2, B_2, C_2$
and for the subsequent generators $A_n, B_n, C_n$ where $n=2^{d-1}+1$.
However, once $A_2, B_2$ and $C_2$ were specified, the $(i,i+1)$ entries
of $A_n, B_n, C_n$ ($d>1$) were determined, but the $(i,j)$ entries
with $j-i>1$ could be arbitrary. It should not surprise the reader
that different choices for $A_2, B_2$ and $C_2$ can yield
different subgroups $\langle A_n, B_n, C_n\rangle$.

We shall give an example of a 2-generated group $G=\langle A,B\rangle$
of $U_n$ that shows that both the derived length and the order can
depend on $\F$. Let $G$ be the subgroup $\langle A,B\rangle$
of~$U_6$ where $A=X_{1,2}X_{5,6}$ and $B=X_{2,3}X_{3,4}^{-1}X_{4,5}$,
and suppose that $\text{char}(\F)=p$ is prime. It follows from
$[\,[\,[B,A],B], [B,A]\,]=X_{1,6}^2$ and $[\,[\,[B,A],A], [B,A]\,]=I$
that $G$ is metabelian if $p=2$, and has derived length 3 if $p>2$.
Furthermore, $|G|=p^7$ if $p=2,3$ and $|G|=p^6$ if $p>3$. In the
latter case $G$ has maximal class (see~\cite[p.\,61]{B}).

\section{2-generated subgroups}

\noindent Suppose that $\frac58 2^d< n\le 2^d$.
It is natural to ask whether $U_n(\F)$ has a 2-generated subgroup
of derived length $d$.
If $U_m(\F)$ has a 2-generated subgroup of derived length~$d$,
then so too does $U_n(\F)$ all $n$ satisfying $m\le n\le 2^d$.
In this section we show that the smallest value of~$m$ for
which $U_m$ has a 2-generated subgroup of derived length~$d$ is
$m=\lfloor\frac{21}{32} 2^d\rfloor+1$.
This is clearly the case if $0\le d<3$. Henceforth assume that $d\ge 3$.

Let $F=\langle a,b\,|\ \ \rangle$ denote a free group of rank~2.
Then $\gamma_r(F)/\gamma_{r+1}(F)$ is an abelian group,
for each positive integer $r$, which is freely generated by the
basic commutators of weight $k$
(see~\cite{MH}). Thus a typical element of $\gamma_2(F)/\gamma_4(F)$
has the form $[b,a]^i[b,a,a]^j[b,a,b]^k\gamma_4(F)$, where
$[b,a,a]$ and $[b,a,b]$ denote left-normed commutators, i.e. $[\,[b,a],a]$
and $[\,[b,a],b]$ respectively. We shall need three lemmas
in the sequel. Lemmas~\ref{L3.1} and \ref{L3.2} are standard so we omit their proofs.

\begin{lemma}\label{L3.1}
Let $x,x'\in\gamma_r(F)$ and $y,y'\in\gamma_s(F)$ where
$x\equiv x'\mod\gamma_{r+1}(F)$ and $y\equiv y'\mod\gamma_{s+1}(F)$.
Then $[x,y]\equiv [x',y']\mod\gamma_{r+s+1}(F)$.
\end{lemma}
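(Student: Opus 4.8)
The plan is to reduce the two-sided perturbation to two one-sided ones and then invoke the fundamental containment $[\gamma_r(F),\gamma_s(F)]\subseteq\gamma_{r+s}(F)$, which I would take as known (it is the basic fact underlying the Lie-ring structure of the associated graded group). Write $x'=xu$ and $y'=yv$, where $u\in\gamma_{r+1}(F)$ and $v\in\gamma_{s+1}(F)$; the goal is then to show $[xu,yv]\equiv[x,y]\pmod{\gamma_{r+s+1}(F)}$.

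First I would handle the left perturbation. Using the standard identity $[ab,c]=[a,c]^b[b,c]$ with $a=x$, $b=u$, $c=yv$, one gets
\[
  [xu,yv]=[x,yv]^u\,[u,yv].
\]
The second factor lies in $[\gamma_{r+1}(F),\gamma_s(F)]\subseteq\gamma_{r+s+1}(F)$, since $yv\in\gamma_s(F)$. For the first factor I would write $[x,yv]^u=[x,yv]\,\bigl[[x,yv],u\bigr]$, and note that $[x,yv]\in\gamma_{r+s}(F)$, so the correction term $[[x,yv],u]$ lies in $[\gamma_{r+s}(F),\gamma_{r+1}(F)]\subseteq\gamma_{r+s+1}(F)$. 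Hence $[xu,yv]\equiv[x,yv]\pmod{\gamma_{r+s+1}(F)}$.

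Next the right perturbation: applying $[a,bc]=[a,c][a,b]^c$ with $a=x$, $b=y$, $c=v$ gives
\[
  [x,yv]=[x,v]\,[x,y]^v.
\]
Here $[x,v]\in[\gamma_r(F),\gamma_{s+1}(F)]\subseteq\gamma_{r+s+1}(F)$, and $[x,y]^v=[x,y]\,[[x,y],v]$ with $[[x,y],v]\in[\gamma_{r+s}(F),\gamma_{s+1}(F)]\subseteq\gamma_{r+s+1}(F)$. Thus $[x,yv]\equiv[x,y]\pmod{\gamma_{r+s+1}(F)}$, and combining with the previous paragraph yields $[x',y']\equiv[x,y]\pmod{\gamma_{r+s+1}(F)}$, as required.

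I do not expect any genuine obstacle here; the result is purely formal commutator calculus, and the only care needed is bookkeeping — checking that every conjugation-correction term is a commutator of the form $[\gamma_i(F),\gamma_j(F)]$ with $i+j\ge r+s+1$, which is automatic because each correction raises the total weight by at least the weight of $u$ or $v$. One could alternatively phrase the whole argument as the assertion that the induced pairing $\gamma_r(F)/\gamma_{r+1}(F)\times\gamma_s(F)/\gamma_{s+1}(F)\to\gamma_{r+s}(F)/\gamma_{r+s+1}(F)$ is well defined, which is exactly the content of the lemma.
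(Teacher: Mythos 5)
Your proof is correct. The paper explicitly omits the proof of this lemma as standard, and your argument --- splitting the perturbation into a left and a right factor via the identities $[ab,c]=[a,c]^b[b,c]$ and $[a,bc]=[a,c][a,b]^c$, then absorbing every correction term into $\gamma_{r+s+1}(F)$ using the containment $[\gamma_i(F),\gamma_j(F)]\subseteq\gamma_{i+j}(F)$ --- is precisely the standard argument being invoked.
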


Applying Lemma~\ref{L3.1} to $[\,[b,a]^i[b,a,a]^j[b,a,b]^k,\,[b,a]^\ell\,]$ shows that
$$[\,[b,a,a], [b,a]\,]\gamma_6(F)
\quad\text{and}\quad[\,[b,a,b], [b,a]\,]\gamma_6(F)$$
generate $F^{(2)}\gamma_6(F)/\gamma_6(F)$.

\begin{lemma}\label{L3.2} Let $T_{r,n}(\tau_1,\dots,\tau_{n-r})$ denote a coset
of $\gamma_{r+1}(U_n)$ comprising matrices $(t_{i,j})$ satisfying
$t_{i,j}=0$ if $1\le j-i<r$, $t_{i,j}=\tau_i$ if $j-i=r$, and
$t_{i,j}$ arbitrary if $j-i>r$.
Then $[T_{r,n}(\alpha_1,\dots,\alpha_{n-r}),T_{s,n}(\beta_1,\dots,
\beta_{n-s})]$ is contained in
\[
  T_{r+s,n}(\alpha_1\beta_{1+r}-\alpha_{1+s}\beta_1,
    \dots,\alpha_{n-r-s}\beta_{n-s}-\alpha_{n-r}\beta_{n-r-s}).
\]
\end{lemma}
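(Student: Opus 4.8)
The plan is to pass from the group commutator $[M,N]$ of representatives to the ordinary matrix commutator of their nilpotent parts, working throughout modulo $\gamma_{r+s+1}(U_n)$. Fix $M\in T_{r,n}(\alpha_1,\dots,\alpha_{n-r})$ and $N\in T_{s,n}(\beta_1,\dots,\beta_{n-s})$, and write $M=I+A$ and $N=I+B$. Thus $A$ vanishes on superdiagonals $1,\dots,r-1$ with $A_{i,i+r}=\alpha_i$, while $B$ vanishes on superdiagonals $1,\dots,s-1$ with $B_{i,i+s}=\beta_i$. Say a matrix has \emph{degree} $\ge k$ when all its entries $(i,j)$ with $0<j-i<k$ vanish; this degree is additive under multiplication, so $A$ has degree $\ge r$, $B$ has degree $\ge s$, and both $AB$ and $BA$ have degree $\ge r+s$.

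First I would record the clean factorization
$$[M,N]=(NM)^{-1}(MN)=I+(I+C)^{-1}(AB-BA),$$
where $C:=NM-I=A+B+BA$; this follows from $(I+C)^{-1}(I+D)=I+(I+C)^{-1}(D-C)$ with $D:=MN-I$, together with $D-C=MN-NM=AB-BA$. Because $C$ has degree $\ge\min(r,s)\ge 1$ while $AB-BA$ has degree $\ge r+s$, the tail $(-C+C^2-\cdots)(AB-BA)$ of the geometric series for $(I+C)^{-1}$ has degree $\ge r+s+1$. Hence
$$[M,N]\equiv I+(AB-BA)\pmod{\gamma_{r+s+1}(U_n)}.$$
In particular $[M,N]$ lies in a coset $T_{r+s,n}(\cdots)$, whose defining parameters are precisely the entries of $AB-BA$ along the $(r+s)$-th superdiagonal.

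It then remains only to read off that superdiagonal. In $(AB)_{i,i+r+s}=\sum_k A_{i,k}B_{k,i+r+s}$ the degree constraints $k-i\ge r$ and $(i+r+s)-k\ge s$ force $k=i+r$, leaving the single term $\alpha_i\beta_{i+r}$; symmetrically $(BA)_{i,i+r+s}$ forces $k=i+s$, leaving $\alpha_{i+s}\beta_i$. Subtracting gives the $i$-th parameter $\alpha_i\beta_{i+r}-\alpha_{i+s}\beta_i$ for $i=1,\dots,n-r-s$, which is exactly the asserted $T_{r+s,n}(\alpha_1\beta_{1+r}-\alpha_{1+s}\beta_1,\dots,\alpha_{n-r-s}\beta_{n-s}-\alpha_{n-r}\beta_{n-r-s})$.

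The only genuine obstacle is honest degree bookkeeping: one must check that every discarded contribution — the higher superdiagonals of $A$ and $B$, and every power of $C$ appearing in the inverse series — strictly raises the degree past $r+s$, so that none of them disturbs the $(r+s)$-th superdiagonal. Setting up the factorization $[M,N]=I+(I+C)^{-1}(AB-BA)$ at the outset makes this transparent, and it is why I would avoid the messier route of expanding $M^{-1}N^{-1}MN$ term by term.
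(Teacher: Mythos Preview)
Your proof is correct. The paper itself does not prove this lemma: it explicitly says ``Lemmas~\ref{L3.1} and~\ref{L3.2} are standard so we omit their proofs.'' Your route via the factorization $[M,N]=I+(NM)^{-1}(AB-BA)$, followed by the observation that every term in the geometric series beyond $AB-BA$ has degree at least $r+s+1$, is a clean and fully rigorous justification; the subsequent extraction of the $(r+s)$-th superdiagonal of $AB-BA$ is exactly right. There is nothing to compare against in the paper beyond the claim that the result is standard, and your argument supplies what the paper omits.
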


How might we go about finding matrices $A,B\in U_n$ such that
$\langle A,B\rangle$ has derived length~$d$? Motivated by the
previous section we suspect that the $(i,i+1)$ entries
of $A$ and $B$ are important. Let
$A\in T_{1,n}(\alpha_1,\dots,\alpha_{n-1})$ and
$B\in T_{1,n}(\beta_1,\dots,\beta_{n-1})$ where the $\alpha_i$ and the
$\beta_j$ are regarded as variables. An evaluation homomorphism from
the polynomial ring
$$P=\Z[\alpha_1,\dots,\alpha_{n-1},\beta_1,\dots,\beta_{n-1}]$$
to $\F$ gives rise to a group homomorphism
$\phi\,:\, U_n(P)\to U_n(\F)$. We shall find a word 
$c_{n-1}(a,b)\in\gamma_{n-1}(F)\cap F^{(d-1)}$ and values for the
$\alpha_i$ and $\beta_j$ in $\F$ such that
$c_{n-1}(\phi(A),\phi(B))$ equals $X_{1,n}$ or $X_{1,n}^{-1}$.

The first case not excluded by Theorem~\ref{T:No2Gen}, or already excluded,
is $n=6$. Let $c_5(a,b)$ equal $[\,[b,a,a],[b,a]\,]$. By repeated
application of Lemma~\ref{L3.2} the $(1,6)$~entry of
$c_5(A,B)$ is
\def\a{\alpha}\def\b{\beta}
\begin{align*}
  [c_5(A,B)]_{1,6}&=[\,[B,A,A],[B,A]\,]_{1,6}\\
  &=[B,A,A]_{1,4}[B,A]_{4,6}-[B,A]_{1,3}[B,A,A]_{3,6}\\
  &=-\a_1\a_2\b_3\a_4\b_5+\a_1\a_2\b_3\b_4\a_5+3\a_1\b_2\a_3\a_4\b_5
    \kern-1pt-4\a_1\b_2\a_3\b_4\a_5\\
  &\,\ \ +\a_1\b_2\b_3\a_4\a_5-2\b_1\a_2\a_3\a_4\b_5+3\b_1\a_2\a_3\b_4\a_5
    -\b_1\a_2\b_3\a_4\a_5
\end{align*}

We make some remarks about this polynomial. First each monomial summand
has five variables. The variables have distinct subscripts and contain three
$\alpha$'s and two $\beta$'s. The polynomial has integer
coefficients and $[B,A,A]$ contributes two $\alpha_i$ and one $\beta_j$
to the first three variables, or to the last three variables of each
monomial summand. Similarly, $[B,A]$ contributes an $\alpha_i$ and a $\beta_j$
to the first two variables, or to the last two variables of
each monomial summand. Thus, even without computing 
$[c_5(A,B)]_{1,6}$, we know that $\a_1\a_2\a_3\b_4\b_5$ is not a summand.
Setting $\a_1=\a_2=\b_3=\a_4=\b_5=1$ and $\b_1=\b_2=\a_3=\b_4=\a_5=0$
shows that $[c_5(\phi(A),\phi(B))]_{1,6}=-1$ and hence
$c_5(\phi(A),\phi(B))=X_{1,6}^{-1}$. This proves that $\langle\phi(A),
\phi(B)\rangle$ is a 2-generated subgroup of $U_6(\F)$ of derived length~3
for all fields $\F$.

Many of the above remarks generalize {\it mutatis mutandis} to other words in
the subgroup $\gamma_{n-1}(F)\cap F^{(d-1)}$. We shall use the following lemma
repeatedly.

\begin{lemma}\label{L3.3} {\rm (Multiplication Lemma)} With the above notation,
suppose that $w\in\gamma_r(F)$, $w'\in\gamma_s(F)$, and
$[w(A,B)]_{1,1+r}$ and $[w'(A,B)]_{1,1+s}$ have monomial summands
$m$ and $m'$ respectively.
If $r\ge s$, and no monomial summand of $[w'(A,B)]_{1,1+s}$ divides
$m$, then $m\psi_r(m')$ is a monomial summand of
$[\,[w(A,B),w'(A,B)]\,]_{1,1+r+s}$ where $\psi_r(m')$ is the polynomial
obtained from $m'$ by adding $r$ to each subscript.
\end{lemma}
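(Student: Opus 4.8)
The plan is to reduce the statement to a single algebraic identity for the $(1,1+r+s)$ entry, obtained by combining Lemma~\ref{L3.2} with a translation symmetry of the leading entries, and then to rule out cancellation by a ``prefix'' argument that is precisely where the two hypotheses ($r\ge s$, and no monomial summand of $[w'(A,B)]_{1,1+s}$ dividing $m$) get used.

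First I would establish the translation symmetry: for any $w\in\gamma_r(F)$,
$$[w(A,B)]_{k,k+r}=\psi_{k-1}\bigl([w(A,B)]_{1,1+r}\bigr),$$
so the $k$th leading parameter is obtained from the first by adding $k-1$ to every subscript (the same shift operation $\psi$ used in the statement). This follows by induction on the construction of $w$ from $a,b$: the base cases $w=a,b$ are immediate since $[A]_{k,k+1}=\alpha_k$ and $[B]_{k,k+1}=\beta_k$, and the inductive step is a direct check that the commutator formula of Lemma~\ref{L3.2} intertwines with $\psi$, with products of commutators handled by additivity of leading entries and linearity of $\psi$. The same induction yields the structural fact that every monomial summand of $[w(A,B)]_{1,1+r}$ is squarefree with subscript set exactly $\{1,\dots,r\}$, namely a product of $r$ variables, one of $\alpha_i$ or $\beta_i$ for each $i\in\{1,\dots,r\}$. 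In particular the leading parameters depend only on the superdiagonal variables, independently of the higher entries of $A$ and $B$, which is what licenses the repeated use of Lemma~\ref{L3.2}.

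Next, writing $u_k=[w(A,B)]_{k,k+r}$ and $v_k=[w'(A,B)]_{k,k+s}$ and applying Lemma~\ref{L3.2} at position $k=1$ gives
$$[\,[w(A,B),w'(A,B)]\,]_{1,1+r+s}=u_1v_{1+r}-u_{1+s}v_1=u_1\psi_r(v_1)-\psi_s(u_1)v_1,$$
where the second equality uses the translation symmetry $v_{1+r}=\psi_r(v_1)$ and $u_{1+s}=\psi_s(u_1)$. The monomial $m\psi_r(m')$ plainly occurs in the first product $u_1\psi_r(v_1)$, and it occurs there exactly once, with coefficient equal to the product of the coefficients of $m$ in $u_1$ and of $m'$ in $v_1$: since the monomials of $u_1$ carry subscripts $\{1,\dots,r\}$ while those of $\psi_r(v_1)$ carry subscripts $\{r+1,\dots,r+s\}$, the factorisation of $m\psi_r(m')$ into a low part and a high part is forced, pinning the $u_1$-factor to $m$ and the $\psi_r(v_1)$-factor to $\psi_r(m')$.

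The main obstacle, and the real content of the lemma, is to show that $m\psi_r(m')$ cannot be cancelled by the second product $\psi_s(u_1)v_1$. Here I would argue that any monomial of $\psi_s(u_1)v_1$ has its subscripts $\le s$ supplied entirely by the $v_1$-factor, since $\psi_s(u_1)$ carries only subscripts $>s$. Hence if $m\psi_r(m')$ equalled $\psi_s(m_2)m_2'$ for summands $m_2$ of $u_1$ and $m_2'$ of $v_1$, then $m_2'$ would have to coincide with the restriction of $m\psi_r(m')$ to subscripts $\le s$. Because $r\ge s$, that restriction is exactly the product of the factors of $m$ with subscript at most $s$, which divides $m$; thus $m_2'$ would be a monomial summand of $[w'(A,B)]_{1,1+s}$ dividing $m$, contradicting the hypothesis. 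Therefore $m\psi_r(m')$ survives with nonzero coefficient in the difference, completing the proof.
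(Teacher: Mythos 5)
Your proof is correct and follows essentially the same route as the paper's: apply Lemma~\ref{L3.2} to write the $(1,1+r+s)$ entry as $[w(A,B)]_{1,1+r}[w'(A,B)]_{1+r,1+r+s}-[w'(A,B)]_{1,1+s}[w(A,B)]_{1+s,1+s+r}$, observe that $m\psi_r(m')$ occurs in the first product, and use the divisibility hypothesis to rule out cancellation from the second. The only difference is that you make explicit the translation symmetry $[w(A,B)]_{k,k+r}=\psi_{k-1}([w(A,B)]_{1,1+r})$ and the subscript-partition argument that forces uniqueness of the factorisation, both of which the paper leaves implicit.
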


\begin{proof}
By Lemma~\ref{L3.2}, $[\,[w(A,B),w'(A,B)]\,]_{1,1+r+s}$ equals
$$[w(A,B)]_{1,1+r}[w'(A,B)]_{1+r,1+r+s}-[w'(A,B)]_{1,1+s}[w(A,B)]_{1+s,1+s+r}$$
and $m\psi_r(m')$ divides the first term. However, as no monomial summand
of $[w'(A,B)]_{1,1+s}$ divides $m$, it follows that $m\psi_r(m')$
is a monomial summand of $[\,[w(A,B),w'(A,B)]\,]_{1,1+r+s}$ as desired.\qed
\end{proof}

By Theorem~\ref{T:No2Gen}, the next case of interest is when $n=11$.
Mimicking the $n=6$ case,
we seek a word $c_{10}(a,b)\in\gamma_{10}(F)\cap F^{(3)}$ such that
the polynomial $[c_{10}(A,B)]_{1,11}$ has a monomial summand
with coefficient $\pm1$. We then assign the value of 1 to the variables
in this summand, and zero to the variables not in the summand.
Since $F^{(2)}\gamma_6(F)/\gamma_6(F)$ has two generators, it follows from
Lemma~\ref{L3.1} that $F^{(3)}\gamma_{11}(F)/\gamma_{11}(F)=
\langle c_{10}(a,b)\gamma_{11}(F)\rangle$ is cyclic where
$$c_{10}(a,b)=[\,[\,[b,a,b],[b,a]\,],c_5(a,b)]
=[\,[\,[b,a,b],[b,a]\,],[\,[b,a,a],[b,a]\,]\,].$$

We abbreviate the phrase ``$m$ is a monomial summand of $p$'' by
``$m\in p$''. Now
\begin{align*}
  m_5&=\b_1\b_2\a_3\a_4\b_5\in [\,[\,[B,A,B],[B,A]\,]\,]_{1,6}\quad\text{and}\\
  m'_5&=\a_1\a_2\b_3\b_4\a_5\in [c_5(A,B)]_{1,6}.
\end{align*}
Hence by Lemma~\ref{L3.3}
$$m_{10}=m_5\psi_5(m'_5)=\b_1\b_2\a_3\a_4\b_5\a_6\a_7\b_8\b_9\a_{10}\in
[c_{10}(A,B)]_{1,11}.$$
Setting $\b_1=\b_2=\a_3=\cdots=\a_{10}=1$ and
$\a_1=\a_2=\b_3=\cdots=\b_{10}=0$ shows that $U_{11}$ has a 2-generated
subgroup of derived length~4.

\begin{theorem}\label{T3.4} Let $d=\lceil\log_2(n)\rceil$.
  Then $U_n$ has a $2$-generated subgroup of derived length~$d$
  if and only if $\frac{21}{32}2^d<n\le 2^d$.
  %$d<5$ and $\frac58 2^d<n\le 2^d$ or $d\ge 5$ and $\frac{21}{32}2^d<n\le 2^d$.
\end{theorem}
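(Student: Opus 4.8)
The plan is to prove the two implications separately and, for the harder (existence) direction, to reduce to a single value of $n$. Since $U_m$ embeds in $U_n$ as the top-left block whenever $m\le n$, a subgroup of $U_m$ of derived length $d$ is then also a subgroup of $U_n$; and $U_n$ itself has derived length $d$ once $2^{d-1}<m\le n\le 2^d$, so $d=\lceil\log_2 n\rceil$ throughout. Hence for the ``if'' direction it suffices to exhibit a $2$-generated subgroup of derived length~$d$ in $U_m$ for the \emph{single} value $m=\lfloor\frac{21}{32}2^d\rfloor+1$, and then invoke the embedding for all $m\le n\le 2^d$. The cases $d=3$ ($m=6$) and $d=4$ ($m=11$) are precisely the examples already built above, so the real content is a recursion producing the minimal example for every $d\ge 5$.

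For the ``only if'' direction I would sharpen the weight estimate of Theorem~\ref{T:No2Gen}. Write $G=\langle A,B\rangle$. As recorded after Lemma~\ref{L3.1}, $F^{(2)}\gamma_6(F)/\gamma_6(F)$ is generated by $[\,[b,a,a],[b,a]\,]$ and $[\,[b,a,b],[b,a]\,]$, so $G^{(2)}\subseteq\gamma_5(G)$ and $G^{(2)}\gamma_6(G)/\gamma_6(G)$ is generated by the two weight-$5$ elements $u=[\,[B,A,A],[B,A]\,]$ and $v=[\,[B,A,B],[B,A]\,]$. The commutator induces an alternating bilinear map $\gamma_5(G)/\gamma_6(G)\times\gamma_5(G)/\gamma_6(G)\to\gamma_{10}(G)/\gamma_{11}(G)$, whose image is the cyclic group generated by $[u,v]\,\gamma_{11}(G)=c_{10}(A,B)\,\gamma_{11}(G)$; thus $G^{(3)}\gamma_{11}(G)/\gamma_{11}(G)$ is cyclic. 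Consequently every element of $G^{(3)}$ has the form $c_{10}(A,B)^{k}h$ with $h\in\gamma_{11}(G)$, and expanding a commutator of two such elements modulo $\gamma_{21}(G)$ gives $G^{(4)}\subseteq\gamma_{21}(G)$. Since $[\gamma_r(G),\gamma_r(G)]\subseteq\gamma_{2r}(G)$, induction yields $G^{(d-1)}\subseteq\gamma_{21\cdot 2^{d-5}}(G)=\gamma_{\frac{21}{32}2^d}(G)$ for $d\ge 5$, while $d=3,4$ follow already from $G^{(2)}\subseteq\gamma_5(G)$ and $G^{(3)}\subseteq\gamma_{10}(G)$. Hence if $n\le\frac{21}{32}2^d$ then $G^{(d-1)}\subseteq\gamma_n(U_n)=\{I\}$, so $G$ has derived length less than $d$.

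For the ``if'' direction I would construct, by induction on $d$, a commutator word $c_{w_d}\in\gamma_{w_d}(F)\cap F^{(d-1)}$ with $w_d=\lfloor\frac{21}{32}2^d\rfloor$, whose entry polynomial $[c_{w_d}(A,B)]_{1,w_d+1}$ contains a monomial summand $M_d$ of coefficient $\pm1$; specialising the variables of $M_d$ to $1$ and all others to $0$ then realises $X_{1,m}^{\pm1}$ with $m=w_d+1$. The inductive step sets $c_{w_{d+1}}=[\,u_d,\,c_{w_d}\,]\in F^{(d)}$, with $u_d\in F^{(d-1)}\cap\gamma_{w_{d+1}-w_d}(F)$, and reads off the new leading monomial from the Multiplication Lemma (Lemma~\ref{L3.3}) as $M_d^{\ast}\,\psi_{w_d}(M_d)$, where $M_d^{\ast}$ is the chosen monomial of $u_d$. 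Because $w_{d+1}=2w_d$ for every $d\ge 5$, one may take $u_d=c_{w_d}(b,a)$, the sibling obtained by interchanging $a$ and $b$ (so $M_d^{\ast}$ is the $\alpha\!\leftrightarrow\!\beta$ swap of $M_d$), exactly as $c_{10}=[\,[\,[b,a,b],[b,a]\,],\,c_5\,]$ arose from $c_5$. The only exceptional transition is $d=4\to 5$, where $w_5=21=w_4+11$; there I would take $u_4$ of weight $11$ in $F^{(3)}$, for instance $u_4=[\,t,\,c_5\,]$ with $t$ a weight-$6$ commutator in $F^{(2)}$, and confirm the resulting weight-$21$ monomial by a direct computation. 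After this single step the recursion reverts to clean doubling.

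The hard part will be the divisibility hypothesis of Lemma~\ref{L3.3}: to conclude that $M_d^{\ast}\psi_{w_d}(M_d)$ truly survives as a summand of coefficient $\pm1$, one must check that $M_d^{\ast}$ is \emph{not} itself a summand of $[c_{w_d}(A,B)]_{1,w_d+1}$. As all summands of that entry share the degree $w_d$, divisibility here is equality of $\alpha/\beta$-patterns. This is immediate when the two words carry different numbers of $a$'s---as at $d=4$, where $c_5$ has three $a$'s and its sibling two---but it becomes delicate once the $a$- and $b$-counts coincide, which already occurs at $w_4=10$, since $c_{10}$ is balanced and its designated monomial $m_{10}$ has five $\alpha$'s and five $\beta$'s. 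Thus the crux is a structural lemma pinning down which $\alpha/\beta$-patterns can appear in $[c_{w_d}(A,B)]_{1,w_d+1}$, sharp enough to guarantee that the complementary pattern $M_d^{\ast}$ is absent at every level. Granting this, each step propagates a coefficient-$\pm1$ monomial from weight $w_d$ to weight $w_{d+1}$, the induction closes, and combining the two directions gives the stated equivalence.
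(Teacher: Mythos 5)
Your ``only if'' direction matches the paper's argument: the cyclicity of $F^{(3)}\gamma_{11}(F)/\gamma_{11}(F)$ gives $G^{(4)}\subseteq\gamma_{21}(G)$, doubling gives $G^{(d-1)}\subseteq\gamma_{21\cdot 2^{d-5}}(G)$ for $d\ge 5$, and the cases $d\le 4$ follow from the $\frac58 2^d$ bound of Theorem~\ref{T:No2Gen}. The reduction of the ``if'' direction to the single value $m=\lfloor\frac{21}{32}2^d\rfloor+1$ and the explicit cases $d\le 4$ are likewise fine.

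The gap is in the inductive step of the ``if'' direction, and you have in fact pointed at it yourself without closing it. Your recursion carries a \emph{single} word $c_{w_d}$ and forms $c_{w_{d+1}}=[\,c_{w_d}(b,a),c_{w_d}(a,b)\,]$. Every monomial summand of $[w'(A,B)]_{1,1+s}$ has $\alpha$-degree equal to the $a$-degree of $w'$, so the only cheap way to verify the hypothesis of Lemma~\ref{L3.3} is to arrange that the two words being commutated have different $a$-degrees. In your scheme this fails after exactly one doubling: whatever weight-$21$ word you take at $d=5$, the word $[\,c_{21}(b,a),c_{21}(a,b)\,]$ of weight $42$ has $a$-degree $(21-k)+k=21$, i.e.\ is balanced, so at the step $d=6\to 7$ the designated monomial and its $\alpha\leftrightarrow\beta$ swap have the same $\alpha$-degree and the criterion yields nothing. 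You acknowledge this and ask for ``a structural lemma pinning down which $\alpha/\beta$-patterns can appear,'' but you neither state nor prove such a lemma, and checking directly that a specific degree-$42$ pattern is absent from $[c_{42}(A,B)]_{1,43}$ is exactly the kind of computation the argument must avoid. The paper resolves this with a device your proposal lacks: it carries \emph{three} words $c_{21},c'_{21},c''_{21}$ at each level whose designated monomials have $\alpha$-degrees congruent to $0,1,2\pmod 3$. Each of the three pairwise commutators then pairs words of different $\alpha$-degree, so Lemma~\ref{L3.3} applies via the easy criterion, and the three resulting monomials again have $\alpha$-degrees $\equiv 0,1,2\pmod 3$; the configuration reproduces itself and the induction runs for all $d\ge 5$. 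Without this (or a proof of the structural lemma you merely posit) the existence half of the theorem is not established beyond $d=6$.
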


\begin{proof} Suppose that $U_n$ has a 2-generated subgroup $G$ of
derived length~$d$. It follows from Theorem~\ref{T:No2Gen} that $\frac58 2^d<n\le 2^d$.
However, if $0\le d<5$ then $\lfloor\frac58 2^d\rfloor=
\lfloor\frac{21}{32}2^d\rfloor$. Hence $\frac{21}{32}2^d<n\le 2^d$ for
$d<5$. Suppose now that $d\ge 5$. We showed in the preamble to this
theorem that
$F^{(3)}\gamma_{11}(F)/\gamma_{11}(F)$ is cyclic. Hence by Lemma~\ref{L3.1},
$F^{(4)}\subseteq\gamma_{21}(F)$. For $d\ge 5$, a simple induction shows that
$F^{(d-1)}\subseteq\gamma_{21\cdot 2^{d-5}}(F)$. 
Since $G^{(d-1)}\subseteq\gamma_{21\cdot 2^{d-5}}(G)$ and $\gamma_n(G)=\{I\}$
it follows that $21\cdot 2^{d-5}<n\le 2^d$ as desired.

Conversely, suppose $\frac{21}{32}2^d<n\le 2^d$.
If $d=0,1,2,3,4$, then the values of $n=\lfloor\frac{21}{32}2^d\rfloor+1$ are
$1,2,3,6,11$ respectively. In each of these cases we have shown that
$U_n$ has a 2-generated subgroup of derived length~$d$.
Suppose henceforth that $d\ge 5$. We shall give a
recursive procedure for constructing a 2-generated subgroup of $U_n$.
It suffices to do this for $n=21\cdot 2^{d-5}+1$.

We use induction on $d$. The initial case when $d=5$ and $n=22$
requires the most lengthy calculations. Note that the hypothesis in Lemma~\ref{L3.3}
that no monomial summand of $[w'(A,B)]_{1,1+s}$ divides $m$ is
easily verified in the case when the first $s$ variables of $m$ have
a different number of $\a$'s than one (and hence every) summand
of $[w'(A,B)]_{1,1+s}$. A lengthy argument which
repeatedly uses this observation and the Multiplication Lemma shows that
\begin{align*}
  &m_{21}=-\a_1\a_2\a_3\b_4\b_5\a_6\psi_6(m_5)\psi_{11}(m_{10})
    \in c_{21}(a,b)\\
  &m'_{21}=\phantom{-}\a_1\a_2\b_3\b_4\b_5\a_6\psi_6(m_5)\psi_{11}(m_{10})
    \in c'_{21}(a,b)\\
  &m''_{21}=-\b_1\b_2\b_3\a_4\a_5\b_6\psi_6(m_5)\psi_{11}(m_{10})
   \in c''_{21}(a,b)
\end{align*}
where
\begin{align*}
  c_{21}(a,b)&=[\,[\,[\,[\,b,a,a,a],[b,a]\,],c_5(a,b)],c_{10}(a,b)]\\
  c'_{21}(a,b)&=[\,[\,[\,[\,b,a,a,b],[b,a]\,],c_5(a,b)],c_{10}(a,b)]\\
  c''_{21}(a,b)&=[\,[\,[\,[\,b,a,b,b],[b,a]\,],c_5(a,b)],c_{10}(a,b)].
\end{align*}

This proves the result for $d=5$ because the polynomial
$[c_{21}(A,B)]_{1,22}$ has a monomial summand with coefficient $\pm1$.
The number of $\a$'s in
$m_{21}$, $m''_{21}$, $m'_{21}$ is congruent to 0, 1, 2 modulo~3 respectively,
and so by the Multiplication Lemma
\begin{align*}
  m'_{21}\psi_{21}(m''_{21})\in d_{21}(a,b)&=[c'_{21}(a,b),c''_{21}(a,b)]\\
  m''_{21}\psi_{21}(m_{21})\in d'_{21}(a,b)&=[c''_{21}(a,b),c_{21}(a,b)]\\
  m_{21}\psi_{21}(m'_{21})\in d''_{21}(a,b)&=[c_{21}(a,b),c'_{21}(a,b)].
\end{align*}
The argument may be applied repeatedly as the number of $\a$'s
occurring in $m'_{21}\psi_{21}(m''_{21})$, $m''_{21}\psi_{21}(m_{21})$,
$m_{21}\psi_{21}(m'_{21})$ is congruent to 0, 1, 2 modulo~3 respectively.
This completes the inductive proof.\qed
\end{proof}

\section*{Acknowledgment}
I would like to thank Peter Pleasants and the referee for
their helpful comments. This research was partially supported by URC 
grant 6294-1341-70766-15.


\begin{thebibliography}{99}
 
\bibitem{B}
N. Blackburn, On a special class of $p$-groups, {\it Acta Mathematica}
{\bf 100} (1958), 45--92.

\bibitem{MH}
M. Hall, {\it The Theory of Groups}, Macmillan, 1964.

\bibitem{H} B. Huppert, {\it Endliche Gruppen I}, Springer-Verlag, 1967.

\end{thebibliography}
\end{document}